\newtheorem{Theorem}{Theorem}[section]
\newtheorem{Definition}[Theorem]{Definition}
\newtheorem{Lemma}[Theorem]{Lemma}
\newtheorem{Example}[Theorem]{Example}
\newtheorem{Observation}[Theorem]{Observation}
\definecolor{Blue}{rgb}{0,0,1}
\definecolor{Red}{rgb}{1,0,0}
\long\def\delete#1{}
\newcommand{\be}{\begin{equation}}
\newcommand{\ee}{\end{equation}}
\newcommand{\bea}{\begin{eqnarray}}
\newcommand{\eea}{\end{eqnarray}}
\newcommand{\bean}{\begin{eqnarray*}}
\newcommand{\eean}{\end{eqnarray*}}
\def\non{\nonumber}
\def\ra{\rangle}
\def\diam{{\rm diam}}
\def\span{{\rm span}}
\def\rn{{\rm rn}}
\def\ve{\varepsilon}
\def\ra{\rightarrow}
\def\({\left(}
\def\){\right)}
\def\[{\left[}
\def\]{\right]}
\begin{document}

\title{Optimal radio labelings of the Cartesian product of the generalized Peterson graph and tree}
\author{\textbf{Payal Vasoya} \\ Gujarat Technological University, Ahmedabad - 383 824, Gujarat (India) \\ E-mail : prvasoya92@gmail.com \\ \\
\textbf{Devsi Bantva} \\ Lukhdhirji Engineering College, Morvi - 363 642, Gujarat (India) \\ E-mail : devsi.bantva@gmail.com}

\pagestyle{myheadings}
\markboth{\centerline{Payal Vasoya and Devsi Bantva}}{\centerline{Optimal radio labelings of the Cartesian product of the generalized Peterson graph and tree}}
\date{}
\openup 0.8\jot
\maketitle

\begin{abstract}
A radio labeling of a graph $G$ is a function $f : V(G) \rightarrow \{0,1,2,\ldots\}$ such that $|f(u)-f(v)| \geq \diam(G) + 1 - d(u,v)$ for every pair of distinct vertices $u,v$ of $G$. The radio number of $G$, denoted by $\rn(G)$, is the smallest number $k$ such that $G$ has radio labeling $f$ with max$\{f(v):v \in V(G)\}$ = $k$. In this paper, we give a lower bound for the radio number for the Cartesian product of the generalized Petersen graph and tree. We present two necessary and sufficient conditions, and three other sufficient conditions to achieve the lower bound. Using these results, we determine the radio number for the Cartesian product of the Peterson graph and stars.

\vskip 2mm
\noindent\textbf{Keywords}: Radio labeling, radio number, Cartesian product of graphs, Generalized Petersen graph, tree.
\end{abstract}

\section{Introduction}\label{Intro:sec}
The channel assignment problem, introduced by Hale in \cite{Hale}, deals with the task of assigning channels to the radio transmitters so that the interference constraints between two transmitters is satisfied and the span of channels used is kept to a minimum. The interference between two transmitters is primarily dependent on the proximity of transmitters. The closer the two transmitters are, higher the interference is and vice-versa. Initially, only two levels of interference were considered, namely minor and major, and accordingly two transmitters are classified as close and very close transmitters.

The problem of assignment of channels to transmitters is studied using a graph model in which transmitters are represented by vertices of a graph such that two vertices are adjacent if major interference occurs and at distance two if minor interference occurs between those two corresponding  transmitters. It is assumed that there is no interference if the vertices are distance two apart in the graph model. Roberts\cite{Roberts} suggested that a pair of transmitters having minor interference must receive different channels and a pair of transmitters having major interference must receive channels that are at least two apart. Motivated through this, Griggs and Yeh\cite{Griggs} introduced $L(2,1)$-labeling which is also known as distance two labeling as follows: An \emph{$L(2,1)$-labeling} (or distance two labeling) of a graph $G=(V(G),E(G))$ is a function $f : V(G) \rightarrow \{0,1,2,...\}$ such that for each pair of distinct vertices $u$ and $v$ satisfies the following conditions:
\begin{equation*}
 |f(u)-f(v)| \geq \left\{
\begin{array}{ll}
2, & \mbox{if  } d(u,v) = 1, \\
1, & \mbox{if  } d(u,v) = 2.
\end{array}
\right.
\end{equation*}
where $d(u,v)$ is the distance between $u$ and $v$ in $G$. The span of $f$ is defined as $\span(f) = \max\{|f(u)-f(v)|: u, v \in V(G)\}$ and the \emph{$\lambda$-number} (or the $\lambda_{2,1}$-number) of $G$, denoted by $\lambda(G)$ (or $\lambda_{2,1}(G)$), is the minimum span of an $L(2,1)$-labeling of $G$. An $L(2,1)$-labeling $f$ of $G$ is called optimal if $\span(f) = \lambda(G)$. The readers can refer the survey article \cite{Calamoneri} and \cite{Yeh} for recent results on $L(2,1)$-labeling of graph $G$.

It is observed that the interference among transmitters might go beyond two levels. A radio labeling extends the level of interference considered in $L(2,1)$-labeling from two to the largest possible - the diameter of G. Motivated through this, Chartrand et al.\cite{Chartrand1, Chartrand2} introduced the concept of
radio labeling of a graph as follows:
\begin{Definition}
{\em
A \emph{radio labeling} of a graph $G$ is a mapping $f: V(G) \rightarrow \{0, 1, 2, \ldots\}$ such that for every pair of distinct vertices $u, v$ of $G$,
$$
|f(u)-f(v)| \geq \diam(G) + 1 - d(u,v).
$$
The integer $f(u)$ is called the \emph{label} of $u$ under $f$, and the \emph{span} of $f$ is defined as
$$
\span(f) = \max\{|f(u)-f(v)|: u, v \in V(G)\}.
$$
The \emph{radio number} of $G$ is defined as
$$
\rn(G) = \min_{f} \span(f)
$$
with minimum taken over all radio labelings $f$ of $G$. A radio labeling $f$ of $G$ is called \emph{optimal} if $\span(f) = \rn(G)$. If $\diam(G)=2$ then radio labeling of $G$ is same as $L(2,1)$-labeling of $G$.
}
\end{Definition}

A radio labeling always assign label $0$ to some vertex then the maximum label used is a $\span(f)$. It is an  interesting and challenging task to find radio number for the graph. The radio labeling problem is well studied for trees by several authors; refer \cite{Bantva1,Tuza,Li,Daphne1,Liu}. The radio labeling is studied only for handful graph families other than trees. The radio number for all graphs of order $n$ and diameter $n-2$ is determined by Benson et al. in \cite{Benson}. Cada et al. studied the radio number of distance graph in \cite{Cada}. Saha and Panigrahi studied the radio number for toroidal grids and square of graphs in \cite{Saha1} and \cite{Saha2}, respectively. Bantva and Liu gave a lower bound for the radio number of block graphs and presented three necessary and sufficient condition to achieve the lower bound in \cite{Bantva4}. The same authors also discussed the radio number of line graph of trees which are block graphs. Das et al. gave a technique to find a lower bound for the radio number of graphs in \cite{Das}. In \cite{Zhou}, Zhou discussed the problem of radio channel assignment for the network modelled by Cayley graphs. Bantva and Liu discussed the radio number of Cartesian product of two trees in \cite{Bantva2022}.

The paper is structured as follows. In Section 2, we define necessary terms and notations required for the present work. In Section 3, we give a lower bound for the radio number of the Cartesian product of the generalized Peterson graph and a tree. We present two necessary and sufficient conditions and, three other sufficient conditions for a lower bound to be tight. Finally, using these results, we determine the radio number for the Cartesian product of the Peterson graph and stars.

\section{Preliminaries}\label{Preli:sec}
In this section, we define necessary terms and notations which will be used in the present work. We follow \cite{West} for standard graph theoretic definitions and notations. The distance between two vertices $u$ and $v$ in a graph $G$, denoted by $d(u,v)$, is the length of the shortest path joining  $u$ and $v$ in a graph $G$. The diameter of a graph $G$ is $\diam(G) = \max\{d(u,v) : u, v \in G\}$. The \emph{weight} of a graph $G$ from a vertex $u \in V(G)$ is defined as $w_G(u)=\sum_{v\in V(G)} d(u,v)$. A vertex $c\in V(G)$ is said to be \emph{weight center} of $G$ if $w_G(c)=\min\{w_G(u) : u \in V(G)\}$. The weight of  graph $G$, denoted by $w(G)$, is defined as $w(G)=w_G(c)$, where $c$ is the weight center of the graph $G$. Denote the set of all weight centers of graph $G$ by $W(G)$. A tree $T$ is a simple connected acyclic graph. The following is known about weight center(s) of a tree $T$.

\begin{Lemma}\cite{Liu}\label{lem:wt1} If $w$ is a weight center of a tree $T$. Then each component of $T-w$ contains at most $|V(T)|/2$ vertices.
\end{Lemma}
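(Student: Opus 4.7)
The proof is by contradiction. The plan is to suppose, for contradiction, that some component of $T-w$ contains strictly more than $|V(T)|/2$ vertices, then exhibit a neighbor of $w$ whose weight is strictly smaller than $w_T(w)$. Since $w$ is a weight center, no vertex of $T$ can have smaller weight, and this contradiction forces the bound.

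The key computation is a standard neighbor-shift argument in a tree. Let $A$ be the vertex set of the alleged ``big'' component of $T-w$, and let $w'\in A$ be the unique neighbor of $w$ lying in that component. Because $T$ is a tree, removing $w$ separates the vertices of $A$ from the rest of $V(T)$ via the edge $ww'$. Therefore, for every $v\in A$ the unique $w$--$v$ path passes through $w'$, which gives $d(w',v)=d(w,v)-1$; and for every $v\in V(T)\setminus A$ (including $w$ itself) the unique $w'$--$v$ path passes through $w$, which gives $d(w',v)=d(w,v)+1$.

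Summing these two relations over $V(T)$ yields
\[
 w_T(w') \;=\; w_T(w) \;-\; |A| \;+\; (|V(T)|-|A|) \;=\; w_T(w) + |V(T)| - 2|A|.
\]
The hypothesis $|A|>|V(T)|/2$ makes the correction term $|V(T)|-2|A|$ strictly negative, so $w_T(w')<w_T(w)$, contradicting the minimality of $w_T(w)$ among all vertices of $T$. Hence every component of $T-w$ has at most $|V(T)|/2$ vertices.

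The whole argument is short and there is no real obstacle; the only thing to be careful about is the tree property that is used implicitly, namely that $w$ is a cut vertex separating $A$ from its complement, so that the two distance identities above are genuine equalities rather than inequalities. If one wished to be fully formal, one could phrase the proof as a one-line computation of $w_T(w')-w_T(w)$ after first verifying those two distance identities using the uniqueness of paths in $T$.
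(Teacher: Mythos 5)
Your proof is correct: the neighbor-shift computation $w_T(w') = w_T(w) + |V(T)| - 2|A|$ is valid because, $T$ being acyclic, $w'$ is the unique neighbor of $w$ in the component $A$, so the two distance identities hold exactly as you state. The paper itself gives no proof of this lemma (it is quoted from the cited reference of Liu and Zhu), and your argument is the standard one used there, so there is nothing to reconcile.
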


\begin{Lemma}\cite{Liu}\label{lem:wt2} Every tree $T$ has one or two weight centers, and $T$ has two weight centers, say, $W(T)=\{w,w'\}$ if and only if $ww'$ is an edge of $T$ and $T-ww'$ consists of two equal sized components.
\end{Lemma}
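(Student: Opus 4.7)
The plan rests on the fundamental identity that for any edge $uv\in E(T)$, if $T-uv$ has components of sizes $a$ (containing $u$) and $b$ (containing $v$), then $w_T(v)-w_T(u)=a-b$; this holds because every vertex on $u$'s side is one farther from $v$ while every vertex on $v$'s side is one closer. I would establish this as a short preliminary observation and then use it to analyse how $w_T$ changes as one walks along a path in $T$.

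To count weight centers, I would take the unique path $w=u_0,u_1,\ldots,u_k=w'$ between two weight centers, and let $X_i$ be the branch of $T$ hanging off $u_i$ (including $u_i$) when the path edges are deleted, so $|X_i|\geq 1$ and $\sum_i |X_i|=n$. Writing $a_i,b_i$ for the component sizes at edge $u_{i-1}u_i$, one checks $a_i=\sum_{j<i}|X_j|$ and $b_i=\sum_{j\geq i}|X_j|$, yielding the strict monotonicity $(a_{i+1}-b_{i+1})-(a_i-b_i)=2|X_i|\geq 2$. Since $w$ is a weight center the identity forces $a_1-b_1\geq 0$, and since $w'$ is, $a_k-b_k\leq 0$. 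Strict monotonicity then forces $k\leq 1$, so any two weight centers are equal or adjacent. Three pairwise adjacent vertices would form a triangle, so at most two exist; existence is automatic since $w_T$ attains its minimum on a finite set.

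The forward direction of the iff is then immediate: applying the identity to the edge $ww'$ and using $w_T(w)=w_T(w')$ forces $a=b=n/2$. For the converse, suppose $ww'\in E(T)$ with $T-ww'$ splitting into two halves of size $n/2$, and let $u\neq w,w'$ lie (say) in the $w$-half. Walking from $u$ to $w$ along $u=u_0,\ldots,u_l=w$ with $l\geq 1$, the component of $T-u_{l-1}w$ containing $w$ comprises $w$ together with the entire $w'$-half of $T-ww'$, so its size is at least $n/2+1$, giving $a_l-b_l\leq -2<0$. By strict monotonicity, $a_i-b_i<0$ for all $1\leq i\leq l$, so summing yields $w_T(w)<w_T(u)$. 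By the analogous argument for $u$ in the $w'$-half, $w$ and $w'$ are both weight centers.

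The main obstacle is setting up the branch decomposition $\{X_i\}$ cleanly and verifying the monotonicity identity $(a_{i+1}-b_{i+1})-(a_i-b_i)=2|X_i|$; once this is in hand, both directions of the iff, the bound of at most two weight centers, and the existence assertion all reduce to careful bookkeeping of the signs of the quantities $a_i-b_i$ along the relevant path.
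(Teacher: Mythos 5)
Your proposal is correct. Note first that the paper does not prove this lemma at all --- it is quoted from the cited reference of Liu and Zhu --- so there is no in-paper argument to compare against; what you have written is a complete, self-contained proof of the standard ``centroid'' characterization. The key identity $w_T(v)-w_T(u)=a-b$ for an edge $uv$ is verified exactly as you say, the telescoping difference $(a_{i+1}-b_{i+1})-(a_i-b_i)=2|X_i|\ge 2$ along the $w$--$w'$ path correctly forces $k\le 1$ (and a triangle-freeness argument rules out three centers), and both directions of the equivalence follow: adjacency plus $w_T(w)=w_T(w')$ gives $a=b=n/2$, while conversely the strict inequality $w_T(w)<w_T(u)$ for every $u\ne w$ in the $w$-half (and symmetrically for $w'$) shows the minimum is attained exactly at $\{w,w'\}$. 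The one point worth making explicit when you write it up is that in the converse direction you also need $w_T(w)=w_T(w')$, which again comes from the edge identity applied to $ww'$ with $a=b=n/2$; you use this implicitly. Otherwise the bookkeeping is sound and nothing is missing.
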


For a tree $T$ rooted at $r$, if $x$ is on the $r-y$ path then $x$ is called ancestor of $y$ and $y$ is called descendant of $x$. Note that the root of the tree $T$ is an ancestor of every vertex and every vertex is its own ancestor and descendant.

If $W(T)=\{w\}$ then we view a tree T rooted at $w$ and if $W(T)=\{w,w'\}$ then we view a tree $T$ rooted at both $w$ and $w'$. Let $u$ be any vertex adjacent to the weight center of $T$, then the subtree induced by $u$ with all its descendants is called a branch of a tree $T$. Two branches are called different if they induced by different vertices adjacent to the same weight center and opposite if they induced by different vertices adjacent to different weight centers. Two vertices $u$ and $v$ are said to be in different branches if either $u$ and $v$ are in different branches or one of them is a weight center of $T$ and other is in the branch of $T$ induced by the vertex adjacent to it. If $W(T)=\{w,w'\}$ then $u$ and $v$ are said to be in opposite branches if they belong to different components of $T-ww'$. Define a level function $L_T : V(T) \rightarrow \{0,1,2,\ldots \}$ by
$$L_T(x):=\min\{d(x,w):w\in W(T)\}.$$
The value of $L_T(x)$ is called level of the vertex $x$ in $T$. The total level of $T$, denoted by $L(T)$, is defined as $$L(T):=\sum_{x\in V(T)}L_T(x).$$
For any two vertices $x,y\in V(T)$, define $$\phi_T(x,y)=\max\{L_T(z):z \mbox{ is a common ancestor of } x \mbox{ and } y\},$$
$$\delta_T(x,y)=\left\{
\begin{array}{ll}
1, & \mbox{if $|W(T)|=2$ and, $x$ and $y$ are in opposite branches}, \\
0, & \mbox{otherwise}.
\end{array}
\right.$$
Define
$$\ve(T) = \left\{
\begin{array}{ll}
1, & \mbox{if $|W(T)|=1$}, \\[0.2cm]
0, & \mbox{if $|W(T)|=2$}.
\end{array}
\right.$$
\begin{Observation}\label{obs1}
Let $T$ be a tree and $x,y\in V(T)$, then
\begin{enumerate}[\rm (1)]
    \item $0\leq \phi_T(x,y)<\max\{L_T(v):v\in V(T)\}$,
    \item $\phi_T(x,y)=0$ if and only if $x$ and $y$ are in opposite or different branches,
    \item $d_T(x,y)=L_T(x)+L_T(y)+\delta_T(x,y)-2\phi_T(x,y)$.
\end{enumerate}
\end{Observation}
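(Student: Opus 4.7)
The proof proposal: all three parts follow from unpacking the definitions of $\phi_T$, $\delta_T$, and $L_T$, together with the rooted-tree structure induced by the weight center(s). I would prove the parts in order (1), (2), (3), treating $|W(T)|=1$ and $|W(T)|=2$ separately where the rooting convention matters, and restricting attention to $x\neq y$ (the only case in which the bound in (1) can hold).

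For part (1), I would note that under the rooting convention each weight center $w\in W(T)$ is an ancestor of every vertex in its own component, with $L_T(w)=0$; hence whenever $x$ and $y$ share a component, $w$ is a common ancestor and $\phi_T(x,y)\geq 0$. When $|W(T)|=2$ and $x,y$ lie in opposite components there is no common ancestor and I take $\phi_T(x,y)=0$ by convention, consistent with $\delta_T(x,y)=1$. For the strict upper bound, any common ancestor $z$ of distinct $x,y$ lies on the path from a root to $x$, so $L_T(z)\leq L_T(x)$; if equality held then $z=x$, forcing $y$ to be a strict descendant of $x$, which gives $L_T(y)>L_T(x)=L_T(z)$. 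Either way some vertex has level strictly exceeding $L_T(z)$, so $\phi_T(x,y)<\max\{L_T(v):v\in V(T)\}$.

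For part (2), I would argue both directions. If $x,y$ are in different branches at a common weight center $w$, the subtrees rooted at the two distinct children of $w$ carrying $x$ and $y$ are vertex-disjoint, so the only common ancestor is $w$ itself and $\phi_T(x,y)=L_T(w)=0$; the boundary subcase where one of $x,y$ equals $w$ is absorbed into the definition of ``different branches'' given just before the observation. If $x,y$ are in opposite branches under $|W(T)|=2$, the convention from part (1) gives $\phi_T(x,y)=0$. Conversely, $\phi_T(x,y)=0$ means the deepest common ancestor has level $0$, i.e.\ it is a weight center; then the unique $x$-$y$ path passes through that weight center, placing $x$ and $y$ in different branches in the single-center case, and the no-common-ancestor situation corresponds to opposite branches in the two-center case.

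For part (3), I would split on the relative position of $x$ and $y$. If they lie in the same branch, their lowest common ancestor $z$ satisfies $L_T(z)=\phi_T(x,y)$, the unique $x$-$y$ path goes up to $z$ then down to $y$, and $d_T(x,y)=(L_T(x)-L_T(z))+(L_T(y)-L_T(z))$, with $\delta_T(x,y)=0$. If they lie in different branches meeting at a single weight center, the path goes through that center, $\phi_T(x,y)=\delta_T(x,y)=0$, and $d_T(x,y)=L_T(x)+L_T(y)$. If $|W(T)|=2$ and $x,y$ lie in opposite components, the unique path traverses the edge $ww'$, so $d_T(x,y)=L_T(x)+1+L_T(y)$, matching the formula with $\phi_T=0$ and $\delta_T=1$. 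The main subtlety, and the only place where something nontrivial happens, is the two-weight-center case: the ancestor relation as usually defined does not reach across the edge $ww'$, so one must either extend it by convention or insert the correction $\delta_T(x,y)$, which is precisely the role the $\delta_T$ term plays in the distance formula.
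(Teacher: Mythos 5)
The paper states this Observation without proof, so there is nothing to compare against; your verification is correct and is exactly the definition-unpacking argument one would supply. You also rightly flag the two points the paper glosses over: the strict inequality in (1) (and the biconditional in (2)) require $x\neq y$, since $\phi_T(x,x)=L_T(x)$ can attain $\max\{L_T(v):v\in V(T)\}$; and when $|W(T)|=2$ with $x,y$ in opposite components one must read $\phi_T(x,y)=0$ as a convention (max over the common ancestors, which are only the weight centers or possibly none), which is precisely what makes the $\delta_T$ correction in (3) come out right.
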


\begin{Definition} For integers $m \geq 3$ and $k \geq 1$, the generalized Petersen graph $P_{m,k}$ is the graph with vertex set $V(P_{m,k}) = \{u_i,v_i : i \in Z_m\}$ and edge set $E(P_{m,k}) = \{u_iu_{i+1}, u_iv_i, v_iv_{i+k} : i \in Z_m\}$.
\end{Definition}

\begin{Definition} Let $G$ and $H$ be any two simple connected graphs. The Cartesian product of $G$ and $H$ is denoted by $G\Box H$ with vertex set $V(G)\times V(H)$, where two vertices $(g,h)$ and $(g',h')$ are adjacent in $G\Box H$ if and only if $g$ and $g'$ are adjacent in $G$ and $h=h'$ in $H$; or $h$ and $h'$ are adjacent in $H$ and $g=g'$ in $G$.
\end{Definition}

\begin{Observation}\label{obs2}
Let $G$ be any graph of order $m$ with diameter $d_1$ and $H$ be any graph of order $n$ with diameter $d_2$, then
\begin{enumerate}[\rm (1)]
\item $|V(G\Box H)|=mn$,
\item $\diam(G\Box H)=d_1+d_2$,
\item For any two vertices $(g_1,h_1)$ and $(g_2,h_2)$ in $G \Box H$, $$d_{G \Box H}((g_1,h_1),(g_2,h_2))=d_G(g_1,g_2)+d_H(h_1,h_2).$$
\end{enumerate}
\end{Observation}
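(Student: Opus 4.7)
The three parts of this observation are standard facts about the Cartesian product, and I would prove them in the order $(1), (3), (2)$ since $(2)$ follows immediately from $(3)$. Part $(1)$ is essentially by definition: the vertex set of $G\Box H$ is the set-theoretic Cartesian product $V(G)\times V(H)$, which has exactly $|V(G)|\cdot|V(H)|=mn$ elements.

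The substantive part is $(3)$, the distance formula, which I would prove by two matching inequalities. For the upper bound, I would produce an explicit walk of the claimed length by concatenating two ``coordinate walks.'' Fix a shortest path $g_1=x_0,x_1,\ldots,x_p=g_2$ in $G$ with $p=d_G(g_1,g_2)$ and a shortest path $h_1=y_0,y_1,\ldots,y_q=h_2$ in $H$ with $q=d_H(h_1,h_2)$. Then the sequence
\[
(x_0,h_1),(x_1,h_1),\ldots,(x_p,h_1)=(g_2,y_0),(g_2,y_1),\ldots,(g_2,y_q)
\]
is, by the definition of edges in $G\Box H$, a walk of length $p+q$ from $(g_1,h_1)$ to $(g_2,h_2)$, so $d_{G\Box H}((g_1,h_1),(g_2,h_2))\leq d_G(g_1,g_2)+d_H(h_1,h_2)$. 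For the lower bound, I would use a projection argument: take any shortest $(g_1,h_1)$--$(g_2,h_2)$ path in $G\Box H$. The adjacency rule of the Cartesian product forces every edge to change exactly one coordinate, so projecting the path onto the first coordinate yields a walk in $G$ from $g_1$ to $g_2$ and projecting onto the second coordinate yields a walk in $H$ from $h_1$ to $h_2$, with the lengths of the two projections summing to the length of the original path. Since each projection has length at least the corresponding distance, the reverse inequality follows.

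Finally, $(2)$ is obtained from $(3)$ by taking maxima: since $d_G(g_1,g_2)$ and $d_H(h_1,h_2)$ depend on disjoint coordinates, the maximum of their sum over all pairs of vertices decouples as
\[
\diam(G\Box H)=\max_{g_1,g_2\in V(G)}d_G(g_1,g_2)+\max_{h_1,h_2\in V(H)}d_H(h_1,h_2)=d_1+d_2.
\]
There is no real obstacle in this proof; the only point requiring a moment's care is the projection step in the lower bound of $(3)$, where one must observe that a single edge of $G\Box H$ cannot move in both coordinates simultaneously, which is exactly why the two projected lengths add rather than overlap.
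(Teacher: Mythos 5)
Your proof is correct, and it is the standard argument for these facts: part (1) by definition, part (3) by the two matching inequalities (concatenated coordinate walks for the upper bound, projection onto the two factors for the lower bound), and part (2) by decoupling the maximum. The paper itself states this Observation without proof, treating it as a known property of the Cartesian product, so there is nothing to contrast with; your write-up, including the careful remark that each edge of $G\Box H$ changes exactly one coordinate so the projected lengths add, is exactly the justification one would supply.
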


\section{Main results}
In this section, we continue to use terms and notations defined in previous section. Note that a radio labeling $f$ of graph $G$ induced an ordering $O(V(G)) := z_0,z_1,\ldots,z_{|V(G)|-1}$ of $V(G)$ such that
$$0 = f(z_0) < f(z_1) < \ldots < f(z_{p-1}) = \span(f).$$

\begin{Theorem}\label{thm:lower} Let $P_{m,k}$ be a generalized Petersen graph of order $2m$ and $T$ be a tree of order $n$. Denote $\diam(P_{m,k} \Box T) = d_G$, $\diam(P_{m,k}) = d_p$, $\diam(T) = d_t$ and $\ve(T) = \ve$. Then
\begin{equation}\label{rn:lower}
\rn(P_{m,k} \Box T) \geq (2mn-1)(d_t+\ve)-4m L(T).
\end{equation}
Moreover, the equality holds in \eqref{rn:lower} if and only if there exist an ordering $O(V(P_{m,k} \Box T)) := z_0, z_1, \ldots, z_{2mn-1}$ of $V(P_{m,k} \Box T)$, where $z_t = (x_{i_t},y_{j_t})$ such that the following all holds for all $0 \leq t \leq 2mn-2$:
\begin{enumerate}[\rm (a)]
\item $d_{P_{m,k}}(x_{i_t},x_{i_{t+1}}) = d_p$,
\item $L_T(y_{j_0})+L_T(y_{j_{2mn-1}}) = 0$,
\item $y_{j_t}$ and $y_{j_{t+1}}$ are in different branches when $|W(T)|=1$ and in opposite branches when $|W(T)|=2$,
\item the mapping $f$ defined on $O(V(P_{m,k} \Box T))$ by $f(z_0) = 0$ and $f(z_{t+1})= f(z_t)+d_t+\ve-L_T(y_{j_t})-L_T(y_{j_{t+1}})$ is a radio labeling of $P_{m,k} \Box T$.
\end{enumerate}
\end{Theorem}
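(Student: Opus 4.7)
The plan is the standard lower-bound-by-telescoping argument. I would take any radio labeling $f$ of $G:=P_{m,k}\Box T$, order $V(G)$ as $z_0,z_1,\ldots,z_{2mn-1}$ with $f(z_0)<f(z_1)<\cdots$, and write $z_t=(x_{i_t},y_{j_t})$. By Observation~\ref{obs2}, $\diam(G)=d_p+d_t$ and $d_G(z_t,z_{t+1})=d_{P_{m,k}}(x_{i_t},x_{i_{t+1}})+d_T(y_{j_t},y_{j_{t+1}})$, so the radio condition between consecutive $z_t$'s gives
\begin{equation*}
f(z_{t+1})-f(z_t)\;\geq\; d_p+d_t+1-d_{P_{m,k}}(x_{i_t},x_{i_{t+1}})-d_T(y_{j_t},y_{j_{t+1}}).
\end{equation*}
I would then bound $d_{P_{m,k}}(x_{i_t},x_{i_{t+1}})\leq d_p$, and use Observation~\ref{obs1}(3) to expand $d_T(y_{j_t},y_{j_{t+1}})=L_T(y_{j_t})+L_T(y_{j_{t+1}})+\delta_T(y_{j_t},y_{j_{t+1}})-2\phi_T(y_{j_t},y_{j_{t+1}})$, noting that $\delta_T\leq 1-\ve$ (as $\delta_T=0$ when $|W(T)|=1$ and $\delta_T\leq 1$ when $|W(T)|=2$) and $\phi_T\geq 0$.

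Summing from $t=0$ to $t=2mn-2$ telescopes the level terms. The key accounting step is that each $y\in V(T)$ occurs as $y_{j_t}$ for exactly $2m$ values of $t$ (one per vertex of $P_{m,k}$), hence $\sum_{t=0}^{2mn-1}L_T(y_{j_t})=2mL(T)$, and so $\sum_{t=0}^{2mn-2}(L_T(y_{j_t})+L_T(y_{j_{t+1}}))=4mL(T)-L_T(y_{j_0})-L_T(y_{j_{2mn-1}})$. Assembling everything yields
\begin{equation*}
\span(f)\;\geq\;(2mn-1)(d_t+\ve)-4mL(T)+L_T(y_{j_0})+L_T(y_{j_{2mn-1}})+2\sum_{t=0}^{2mn-2}\phi_T(y_{j_t},y_{j_{t+1}}),
\end{equation*}
from which \eqref{rn:lower} follows by discarding the nonnegative remainder terms.

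For the equivalence, the forward direction tracks when each inequality above is tight: saturating $d_{P_{m,k}}\leq d_p$ gives (a); demanding that the endpoint-level terms dropped be zero gives (b); forcing $\phi_T(y_{j_t},y_{j_{t+1}})=0$ together with $\delta_T(y_{j_t},y_{j_{t+1}})=1-\ve$, which by Observation~\ref{obs1}(2) and the definition of $\delta_T$ is precisely the branch/opposite-branch requirement, gives (c); and saturating the consecutive radio inequality fixes the gap as in (d). Conversely, once (a)--(d) hold, (d) itself asserts that $f$ is a radio labeling, and its telescoped span collapses via (b) to $(2mn-1)(d_t+\ve)-4mL(T)$. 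The main subtle point, and the reason (d) cannot be dropped, is that conditions (a)--(c) only control the \emph{consecutive} gaps of $f$; the radio inequality $|f(z_s)-f(z_t)|\geq d_G+1-d_G(z_s,z_t)$ for pairs with $|s-t|\geq 2$ is not a formal consequence, so (d) must be posited separately. This is exactly what the subsequent sufficient-condition theorems in the paper are designed to guarantee in concrete families.
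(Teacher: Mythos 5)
Your proposal is correct and follows essentially the same route as the paper: telescoping the consecutive radio inequalities, decomposing $d_G$ via Observation~\ref{obs2}, expanding $d_T$ with the level function via Observation~\ref{obs1}(3), and using that each vertex of $T$ appears $2m$ times so that $\sum_t L_T(y_{j_t})=2mL(T)$. The only cosmetic difference is that you fold the paper's two cases on $|W(T)|$ into the single bound $\delta_T\leq 1-\ve$, and you spell out the tightness analysis for the ``moreover'' part (including why (d) is not implied by (a)--(c)) somewhat more explicitly than the paper does.
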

\begin{proof} Denote $G = P_{m,k} \Box T$. It is suffices to prove that the span of any radio labeling $f$ of $G$ is no less than the right-hand side of \eqref{rn:lower}. Let $f$ be any radio labeling of $P_{m,k} \Box T$ with $0 = f(z_0) < f(z_1) < \ldots < f(z_{2mn-1}) = \span(f)$. Since $f$ is a radio labeling, we have $f(z_{i+1})-f(z_i) \geq d_G+1-d_G(z_i,z_{i+1}) = d_p+d_t+1-d_G(z_i,z_{i+1})$ for $0 \leq i \leq 2mn-2$. Summing up these $2mn-1$ inequalities, we obtain
\begin{equation}\label{eqn:span1}
\span(f) = f(z_{2mn-1}) \geq (2mn-1)(d_p+d_t+1)-\sum_{i=0}^{2mn-2}d_G(z_i,z_{i+1})
\end{equation}
Denote $z_t = (x_{i_t},y_{j_t})$ for $0 \leq t \leq 2mn-1$, where $x_{i_t} \in V(P_{m,k})$ and $y_{j_t} \in V(T)$. By Observations \ref{obs1} and \ref{obs2}, we have
\begin{eqnarray}
\sum_{t=0}^{2mn-2}d_G(z_t,z_{t+1}) & = & \sum_{t=0}^{2mn-2}[d_{P_{m,k}}(x_{i_t},x_{i_{t+1}})+d_T(y_{j_t},y_{j_{t+1}})] \non \\
& \leq & \sum_{t=0}^{2mn-2}[d_p + L_T(y_{j_t})+L_T(y_{j_{t+1}})+\delta_T(y_{j_t},y_{j_{t+1}})-2\phi_T(y_{j_t},y_{j_{t+1}})] \non \\
& = & (2mn-1)d_p+2\sum_{t=0}^{2mn-1}L_T(y_{j_t})-L_T(y_{j_0})-L_T(y_{j_{2mn-1}}) \non \\
& & \hspace{4cm}+\sum_{t=0}^{2mn-2}[\delta_T(y_{j_t},y_{j_{t+1}})-2\phi_T(y_{j_t},y_{j_{t+1}})] \non \\
& = & (2mn-1)d_p+4mL(T)-L_T(y_{j_0})-L_T(y_{j_{2mn-1}}) \non \\
& & \hspace{4cm}+\sum_{t=0}^{2mn-2}[\delta_T(y_{j_t},y_{j_{t+1}})-2\phi_T(y_{j_t},y_{j_{t+1}})].\label{eqn:span2}
\end{eqnarray}

We consider the following two cases.

\textsf{Case-1:} $|W(T)| = 1$.

In this case, $\delta_T(y_{j_t},y_{j_{t+1}}) = 0$, $\phi_T(y_{j_t},y_{j_{t+1}}) \geq 0$ for all $0 \leq t \leq 2mn-2$ and $L_T(y_{j_0})+L_T(y_{j_{2mn-1}}) \geq 0$. Substituting these all in \eqref{eqn:span2}, we get
\begin{equation}\label{eqn:span3}
\sum_{t=0}^{2mn-2}d_G(z_t,z_{t+1}) \leq (2mn-1)d_p+4mL(T).
\end{equation}
Substituting \eqref{eqn:span3} in \eqref{eqn:span1}, we obtain
\begin{equation*}
\span(f) = f(z_{2mn-1}) \geq (2mn-1)(d_t+1)-4m L(T).
\end{equation*}

\textsf{Case-2:} $|W(T)| = 2$.

In this case, $0 \leq \delta_T(y_{j_t},y_{j_{t+1}}) \leq 1$,  $\phi_T(y_{j_t},y_{j_{t+1}}) \geq 0$ for all $0 \leq t \leq 2mn-2$ and $L_T(y_{j_0})+L_T(y_{j_{2mn-1}}) \geq 0$. Substituting these all in \eqref{eqn:span2}, we get
\begin{equation}\label{eqn:span4}
\sum_{t=0}^{2mn-2}d_G(z_t,z_{t+1}) \leq (2mn-1)d_p+4mL(T)+(2mn-1).
\end{equation}
Substituting \eqref{eqn:span4} in \eqref{eqn:span2}, we obtain
$$
\span(f) = f(z_{2mn-1}) \geq (2mn-1)d_t-4mL(T).
$$
It is easy to see from above Cases 1 and 2 that the equality hold in \eqref{rn:lower} if and only if the conditions in moreover part hold which completes the proof.
\end{proof}

\begin{Theorem}\label{thm:main} Let $P_{m,k}$ be a generalized Petersen graph of order $2m$ and $T$ be a tree of order $n$. Denote $P_{m,k} \Box T = G$, $\diam(P_{m,k} \Box T) = d_G$, $\diam(P_{m,k}) = d_p$, $\diam(T) = d_t$ and $\ve(T) = \ve$. Then
\begin{equation}\label{rn:main}
\rn(P_{m,k} \Box T) = (2mn-1)(d_t+\ve)-4mL(T)
\end{equation}
if and only if there exist an ordering $O(V(P_{m,k} \Box T)) := z_0,z_1,\ldots,z_{2mn-1}$ of $V(P_{m,k} \Box T)$, where $z_t = (x_{i_t},y_{j_t})$ such that the following all holds for all $0 \leq t \leq 2mn-2$:
\begin{enumerate}[\rm (a)]
\item $d_{P_{m,k}}(x_{i_t},x_{i_{t+1}}) = d_p$,
\item $L_T(y_{j_0})+L_T(y_{j_{2mn-1}}) = 0$,
\item For any $z_a=(x_{i_a},y_{j_a})$ and $z_b = (x_{i_b},y_{j_b}),0 \leq a < b \leq 2mn-1$; the distance $d_{P_{m,k}}(x_{i_a},x_{i_b})$ and $d_T(y_{j_a},y_{j_b})$ satisfies
\begin{equation}\label{eqn:dab}
d_T(y_{j_a},y_{j_b})+d_{P_{m,k}}(x_{i_a},x_{i_b}) \geq \sum_{t=a}^{b-1}(L_T(y_{j_t})+L_T(y_{j_{t+1}})-d_t-\ve)+d_t+d_p+1.
\end{equation}
\end{enumerate}
\end{Theorem}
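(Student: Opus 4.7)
The plan is to reduce Theorem \ref{thm:main} to the already-established Theorem \ref{thm:lower} by showing that its three conditions (a), (b), and \eqref{eqn:dab} are equivalent to the four conditions (a)--(d) of Theorem \ref{thm:lower}. The key algebraic observation driving both directions is that the labeling $f$ in condition (d) of Theorem \ref{thm:lower} telescopes as
$$f(z_b)-f(z_a)=\sum_{t=a}^{b-1}\bigl(d_t+\ve-L_T(y_{j_t})-L_T(y_{j_{t+1}})\bigr),$$
so the radio-labeling inequality $f(z_b)-f(z_a)\ge d_G+1-d_G(z_a,z_b)$ rearranges, after substituting $d_G(z_a,z_b)=d_{P_{m,k}}(x_{i_a},x_{i_b})+d_T(y_{j_a},y_{j_b})$ and $d_G=d_p+d_t$ via Observation \ref{obs2}, into exactly the inequality \eqref{eqn:dab}.

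For the necessity direction, I assume $\rn(P_{m,k}\Box T)$ meets the lower bound. Theorem \ref{thm:lower} then supplies an ordering satisfying its clauses (a)--(d); clauses (a) and (b) here are immediate, and for the distance clause (c) I apply the radio-labeling inequality of the $f$ from Theorem \ref{thm:lower}(d) to an arbitrary pair $(z_a,z_b)$ and invoke the rearrangement above.

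For the sufficiency direction, I first recover condition (c) of Theorem \ref{thm:lower} from \eqref{eqn:dab} at consecutive indices $b=a+1$. Substituting $d_{P_{m,k}}(x_{i_a},x_{i_{a+1}})=d_p$ from (a) collapses \eqref{eqn:dab} to $d_T(y_{j_a},y_{j_{a+1}})\ge L_T(y_{j_a})+L_T(y_{j_{a+1}})+1-\ve$, and combining this with Observation \ref{obs1}(3) forces $\phi_T(y_{j_a},y_{j_{a+1}})=0$ together with $\delta_T(y_{j_a},y_{j_{a+1}})=1-\ve$; this is precisely condition (c) of Theorem \ref{thm:lower}. I then define $f$ by the recursion in Theorem \ref{thm:lower}(d); the same consecutive-pair bound also yields $d_t+\ve-L_T(y_{j_t})-L_T(y_{j_{t+1}})\ge 1$, so $f$ is strictly increasing and nonnegative, with the radio condition on adjacent pairs holding by construction. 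For arbitrary $a<b$, reversing the rearrangement of the first paragraph converts \eqref{eqn:dab} into $f(z_b)-f(z_a)\ge d_G+1-d_G(z_a,z_b)$, so $f$ is a valid radio labeling; Theorem \ref{thm:lower} then yields equality in \eqref{rn:main}.

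The main obstacle is essentially bookkeeping: the heart of the argument is a single algebraic rearrangement equating \eqref{eqn:dab} with the radio-labeling inequality for the telescoping $f$, together with the short deduction that the consecutive-pair restriction of \eqref{eqn:dab} enforces the branch condition required by Theorem \ref{thm:lower}. No structural idea beyond Theorem \ref{thm:lower} is needed; care is required only to ensure the two rearrangements are performed in opposite directions in the two halves of the proof.
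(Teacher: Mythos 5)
Your proposal is correct and follows essentially the same route as the paper: both directions reduce Theorem \ref{thm:main} to Theorem \ref{thm:lower} via the telescoping identity $f(z_b)-f(z_a)=\sum_{t=a}^{b-1}(d_t+\ve-L_T(y_{j_t})-L_T(y_{j_{t+1}}))$, with the consecutive-pair case of \eqref{eqn:dab} recovering the branch condition in the sufficiency direction. Your added remark that the consecutive-pair bound forces each label increment to be at least $1$ is a small detail the paper leaves implicit, but otherwise the arguments coincide.
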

\begin{proof} \textsf{Necessity:} Suppose \eqref{rn:main} holds. By Theorem \ref{thm:lower}, there exist a radio labeling $f$ of $P_{m,k} \Box T$ with $0 = f(z_0) < f(z_1) < \ldots < f(z_{2mn-1}) = \rn(P_{m,k} \Box T)$, where $z_t = (x_{i_t},y_{j_t}), x_{i_t} \in V(P_{m,k})$ and $y_{j_t} \in V(T)$ such that the conditions (a)-(d) in Theorem \ref{thm:lower} hold. Hence, the conditions (a) and (b) are satisfied. For any two vertices $z_a$ and $z_b\;(0 \leq a < b \leq 2mn-1)$, by the condition (d) of Theorem \ref{thm:lower}, we have
\begin{equation*}
f(z_b)-f(z_a) = \sum_{t=a}^{b-1}(d_t+\ve-L_T(y_{j_t})-L_T(y_{j_{t+1}})).
\end{equation*}
Since $f$ is a radio labeling, $f(z_b)-f(z_a) \geq d_G+1-d_G(z_a,z_b) = d_p+d_t+1-d_T(y_{j_a},y_{j_b})-d_{P_{m,k}}(x_{i_a},x_{i_b})$. Substituting this in above equation, we obtain
\begin{equation*} \sum_{t=a}^{b-1}(d_t+\ve-L_T(y_{j_t})-L_T(y_{j_{t+1}})) = f(z_b)-f(z_a) \geq d_p+d_t+1-d_T(y_{j_a},y_{j_b})-d_{P_{m,k}}(x_{i_a},x_{i_b}).
\end{equation*}
Hence, the condition (c) is satisfied.

\textsf{Sufficiency:} Let an ordering $O(V(P_{m,k} \Box T)) := z_0,z_1,\ldots,z_{2mn-1}$ of $V(P_{m,k} \Box T)$ satisfying the conditions (a)-(c) of hypothesis. We prove that \eqref{rn:main} holds and for this purpose, it is enough to show that the conditions (c) and (d) in Theorem \ref{thm:lower} hold. Take $a=t$ and $b = t+1$ in \eqref{eqn:dab}, we have $d_T(y_{j_t},y_{j_{t+1}}) \geq L_T(y_{j_t})+L_T(y_{j_{t+1}})+1-\ve$ which implies that $y_{j_t}$ and $y_{j_{t+1}}$ are in different branches when $|W(T)|=1$ and opposite branches when $|W(T)|=2$. Hence, the condition (c) in Theorem \ref{thm:lower} is satisfied. Let $f$ is defined by $f(z_0)=0$ and $f(z_{t+1}) = f(z_t)+d_t+\ve-L_T(y_{j_t})-L_T(y_{j_{t+1}})$ for $0 \leq t \leq 2mn-2$. We prove that $f$ is a radio labeling. Let $z_a$ and $z_b\;(0 \leq a < b \leq 2mn-1)$ be two arbitrary vertices. Then
\begin{eqnarray*}
f(z_b)-f(z_a) & = & \sum_{t=a}^{b-1}(d_t+\ve-L_T(y_{j_t})-L_T(y_{j_{t+1}})) \\
& \geq & d_p+d_t+1-d_{P_{m,k}}(x_{i_a},x_{i_{b}})-d_T(y_{j_a},y_{j_{b}}) \\
& = & d_G+1-d_G(z_a,z_b).
\end{eqnarray*}
Hence $f$ is a radio labeling. Thus the condition (d) of Theorem \ref{thm:lower} is satisfied. The proof is complete.
\end{proof}

In the next result, we give three sufficient conditions to achieve the lower bound for the radio number for the Cartesian product of the generalized Peterson graph $P_{m,k}$ and a tree.

\begin{Theorem}\label{thm:suf} Let $P_{m,k}$ be a generalized Peterson graph of order $2m$ and $T$ be a tree of order $n$. Denote $\diam(P_{m,k}) = d_p$, $\diam(T) = d_t$ and $\ve(T) = \ve$. Then
\begin{equation}\label{rn:suf}
\rn(P_{m,k} \Box T) = (2mn-1)(d_t+\ve)-4mL(T)
\end{equation}
if there exist an ordering $O(V(P_{m,k} \Box T)) := z_0,z_1,\ldots,z_{2mn-1}$ of $V(P_{m,k} \Box T)$, where $z_t = (x_{i_t},y_{j_t})$ such that the following holds for all $0 \leq t \leq 2mn-2$.
\begin{enumerate}[\rm (a)]
\item $d_{P_{m,k}}(x_{i_t},x_{i_{t+1}}) = d_p$ and for $b \geq a+2$,
$$d_{P_{m,k}}(x_{i_a},x_{i_b}) \geq \left\{
\begin{array}{ll}
d_p-(b-a-1), & \mbox{if $|W(T)|=1$}; \\[0.2cm]
d_p-(b-a-2), & \mbox{if $|W(T)|=2$}.
\end{array}
\right.$$
\item $L_T(y_{j_0})+L_T(y_{j_{2mn-1}})=0$,
\item $y_{j_t}$ and $y_{j_{t+1}}$ are in different branches when $|W(T)|=1$ and in opposite branches when $|W(T)|=2$,
\end{enumerate}
and one of the following holds.
\begin{enumerate}[\rm (d)]
\item[\rm (d)] $\min\{d_T(y_{j_t},y_{j_{t+1}}), d_T(y_{j_{t+1}},y_{j_{t+2}})\} \leq (d_t+1-\ve)/2$ for all $0 \leq t \leq 2mn-3$,
\item[\rm (e)] $d_T(y_{j_t},y_{j_{t+1}}) \leq (d_t+1+\ve)/2$ for all $0 \leq t \leq 2mn-2$,
\item[\rm (f)] For all $0 \leq t \leq 2mn-1$, $L_T(y_{j_t}) \leq (d_t+1)/2$ when $|W(T)|=1$ and $L_T(y_{j_t}) \leq (d_t-1)/2$ when $|W(T)|=2$ and for $z_a = (x_{i_a},y_{j_a})$, $z_b = (x_{i_b},y_{j_b})\;(0 \leq a < b \leq 2mn-1)$, if $y_{j_a}$ and $y_{j_b}$ are in the same branch of $T$ then $b-a \geq d_t+d_p$.
\end{enumerate}
\end{Theorem}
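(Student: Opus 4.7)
My plan is to invoke Theorem \ref{thm:main}. The first part of hypothesis (a) and hypothesis (b) of Theorem \ref{thm:suf} directly give conditions (a) and (b) of Theorem \ref{thm:main}, so the entire job is to verify the distance inequality \eqref{eqn:dab} of Theorem \ref{thm:main} for every pair $0 \leq a < b \leq 2mn-1$. When $b = a+1$, hypothesis (c) together with Observation \ref{obs1}(3) yields $d_T(y_{j_a}, y_{j_{a+1}}) = L_T(y_{j_a}) + L_T(y_{j_{a+1}}) + (1 - \ve)$; combined with $d_{P_{m,k}}(x_{i_a}, x_{i_{a+1}}) = d_p$, both sides of \eqref{eqn:dab} collapse to $L_T(y_{j_a}) + L_T(y_{j_{a+1}}) + d_p + 1 - \ve$, and the single-step inequality becomes an equality.

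For $b \geq a+2$, I rewrite \eqref{eqn:dab} in the equivalent form
\begin{equation*}
(b-a)(d_t + \ve) - L_T(y_{j_a}) - L_T(y_{j_b}) - 2\sum_{t=a+1}^{b-1} L_T(y_{j_t}) + d_T(y_{j_a}, y_{j_b}) + d_{P_{m,k}}(x_{i_a}, x_{i_b}) \geq d_t + d_p + 1,
\end{equation*}
and substitute the lower bound on $d_{P_{m,k}}(x_{i_a}, x_{i_b})$ provided by the second part of hypothesis (a), namely $d_p - (b - a - 1)$ when $|W(T)| = 1$ and $d_p - (b - a - 2)$ when $|W(T)| = 2$. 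This cancels the $d_p$ contribution on both sides and reduces the problem to a tree-side estimate that hinges on an upper bound for the intermediate sum $\sum L_T(y_{j_t})$. Condition (e) translates, via Observation \ref{obs1}(3) and hypothesis (c), into the uniform bound $L_T(y_{j_t}) + L_T(y_{j_{t+1}}) \leq (d_t - 1 + 3\ve)/2$ on every consecutive pair, and summing across the window closes the inequality in one stroke. Condition (d) supplies that same bound for at least one of every two adjacent consecutive pairs -- two ``long'' pairs cannot be adjacent -- and a parity-sensitive accounting over $[a,b]$ recovers the same savings on $\sum L_T(y_{j_t})$. Condition (f) imposes an absolute ceiling on each $L_T(y_{j_t})$, which directly handles the sub-case in which $y_{j_a}$ and $y_{j_b}$ lie in different or opposite branches (so $\phi_T = 0$); the residual same-branch sub-case, where \eqref{eqn:dab} is tightest, is precisely where the extra assumption $b - a \geq d_t + d_p$ kicks in and provides the necessary slack through the $(b-a)(d_t + \ve)$ term.

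The main obstacle is case (d): one must carefully track short versus long consecutive pairs across a window whose boundary pairs may themselves be of either type, and carry out the bookkeeping uniformly in $\ve \in \{0,1\}$ (the arithmetic is sharper when $\ve = 0$). A second delicate point is in case (f): when $y_{j_a}$ and $y_{j_b}$ lie in the same branch, $\phi_T(y_{j_a}, y_{j_b}) > 0$ tightens \eqref{eqn:dab} substantially, and the lower bound on $d_{P_{m,k}}$ from hypothesis (a) alone does not close it; instead, it is the width condition $b - a \geq d_t + d_p$ that does the final closing there.
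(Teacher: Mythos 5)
Your overall strategy coincides with the paper's: reduce to Theorem \ref{thm:main} by verifying \eqref{eqn:dab} for every pair, settle $b=a+1$ exactly via hypothesis (c) and Observation \ref{obs1}(3), and for $b\geq a+2$ feed in the Petersen-side lower bound from (a) and estimate the tree-side sum. Your $b=a+1$ computation, your translation of (e) into the per-pair bound $L_T(y_{j_t})+L_T(y_{j_{t+1}})\leq (d_t-1+3\ve)/2$, and your outline for case (f) (absolute ceiling on levels for pairs in different or opposite branches, the width condition $b-a\geq d_t+d_p$ for same-branch pairs) all match the paper.

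The gap is in your treatment of cases (d) and (e): you confine the role of $\phi_T$ to case (f), and for (e) you assert that summing the per-pair bound across the window closes \eqref{eqn:dab} "in one stroke". It does not for short windows. Take $\ve=1$ and $b=a+2$: the per-pair bound gives $E(a,a+2)=L_T(y_{j_a})+2L_T(y_{j_{a+1}})+L_T(y_{j_{a+2}})-d_t-1+d_p\leq d_p+1$, while hypothesis (a) only guarantees $d_{P_{m,k}}(x_{i_a},x_{i_{a+2}})\geq d_p-1$, and condition (c) constrains only consecutive pairs, so $y_{j_a}$ and $y_{j_{a+2}}$ may lie in the same branch (or even coincide as vertices of $T$), making $d_T(y_{j_a},y_{j_{a+2}})$ as small as $0$; the target is then $d_p-1$, short of your bound by $2$. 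The deficit is precisely $2\phi_T(y_{j_a},y_{j_b})$ up to constants, so it cannot be ignored outside case (f). The paper's device at this point is to use the constraint in the asymmetric form $L_T(y_{j_{a+1}})\leq C-L_T(y_{j_a})$, trading the surplus $2L_T(y_{j_{a+1}})$ for $-2L_T(y_{j_a})\leq -2\phi_T(y_{j_a},y_{j_b})$, which reconstitutes $d_T(y_{j_a},y_{j_b})=L_T(y_{j_a})+L_T(y_{j_b})+\delta_T(y_{j_a},y_{j_b})-2\phi_T(y_{j_a},y_{j_b})$ on the right-hand side; the same trade is what carries the $b=a+2$ and $b=a+3$ same-branch subcases of condition (d), which a bookkeeping of $\sum L_T(y_{j_t})$ alone will not close. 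Until the $-2\phi_T$ term is built into the short-window estimates for (d) and (e), the argument is incomplete.
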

\begin{proof} We show that if (a)-(c) and one of (d)-(f) holds for an ordering $O(V(P_{m,k} \Box T)):=z_0,z_1,\ldots,z_{2mn-1}$ of $V(P_{m,k} \Box T)$, where $z_t = (x_{i_t},y_{j_t}); 0 \leq t \leq 2mn-1$ then (a)-(c) of Theorem \ref{thm:main} satisfies. Since the conditions (a) and (b) are identical in both Theorems \ref{thm:main} and \ref{thm:suf}, we verify the condition (c) of Theorem \ref{thm:main} only. Let $z_a = (x_{i_a},y_{j_a})$ and $z_b = (x_{i_b},y_{j_b}), 0 \leq a < b \leq 2mn-1$ be two arbitrary vertices. Denote the right-hand side of \eqref{eqn:dab} by $E(a,b)$. We consider the following two cases.

\textsf{Case-1:} $|W(T)| = 1$. In this case, recall that $\ve(T)=1$ and $\delta_T(y_{j_t},y_{j_{t+1}})=0$ for all $0 \leq t \leq 2mn-2$ by the definition of $\delta$.

\textsf{Subcase-1.1:} Suppose (a)-(d) holds. By (b) and (d), we have $L_T(y_{j_0}) = L_T(y_{j_{2mn-1}}) = 0 < d_t/2$ and for all $1 \leq t \leq 2mn-2$, $L_T(y_{j_t}) \leq \min\{L_T(y_{j_{t-1}})+L_T(y_{j_t}), L_T(y_{j_t})+L_T(y_{j_{t+1}})\} = \min\{d_T(y_{j_{t-1}},y_{j_t}),d_T(y_{j_t},y_{j_{t+1}})\} \leq d_t/2$.  If $z_a = (x_{i_a},y_{j_a})$ and $z_b = (x_{i_b},y_{j_b})$ such that $y_{j_a}$ and $y_{j_b}$ are in different branches then note that $d_T(y_{j_a},y_{j_b}) = L_T(y_{j_a})+L_T(y_{j_b})$. Hence, we have
\begin{eqnarray*}
E(a,b) & = & L_T(y_{j_a})+L_T(y_{j_b})+2\sum_{t=a+1}^{b-1}L_T(y_{j_t})-(b-a-1)d_t-(b-a)+d_p+1 \\
& \leq & L_T(y_{j_a})+L_T(y_{j_b})+2(b-a-1)(d_t/2)-(b-a-1)d_t+(d_p-b+a+1) \\
& = & L_T(y_{j_a})+L_T(y_{j_b})+(d_p-b+a+1) \leq d_T(y_{j_a},y_{j_b})+d_{P_{m,k}}(x_{i_a},x_{i_b}).
\end{eqnarray*}
If $z_a = (x_{i_a},y_{j_a})$ and $z_b = (x_{i_b},y_{j_b})$ such that $y_{j_a}$ and $y_{j_b}$ are in the same branch of $T$ then note that $b-a \geq 2$. If $b-a \geq 4$ then $\min\{L_T(y_{j_t})+L_T(y_{j_{t+1}}), L_T(y_{j_{t+1}})+L_T(y_{j_{t+2}})\} \leq d_t/2$ as $\min\{d_T(y_{j_t},y_{j_{t+1}}), d_T(y_{j_{t+1}},y_{j_{t+2}})\} \leq d_t/2$ and $\max\{L_T(y_{j_t})+L_T(y_{j_{t+1}}), L_T(y_{j_{t+1}})+L_T(y_{j_{t+2}})\} \leq d_t$. Hence, we have
\begin{eqnarray*}
E(a,b) & \leq & (2(d_t/2)+2d_t)-3d_t-4+d_p+1 = d_p-3 = d_p-b+a+1 \\
& \leq & d_{P_{m,k}}(x_{i_t},x_{i_{t+1}}) \leq d_{P_{m,k}}(x_{i_t},x_{i_{t+1}})+d_T(y_{j_t},y_{j_{t+1}}).
\end{eqnarray*}
If $b=a+3$ then there are two possibilities: (i) $d_T(y_{j_a},y_{j_{a+1}}) \leq d_t/2, d_T(y_{j_{a+1}},y_{j_{a+2}}) > d_t/2$ and $d_T(y_{j_{a+2}},y_{j_{a+3}}) \leq d_t/2$, (ii) $d_T(y_{j_a},y_{j_{a+1}}) > d_t/2, d_T(y_{j_{a+1}},y_{j_{a+2}}) \leq d_t/2$ and $d_T(y_{j_{a+2}},y_{j_{a+3}}) > d_t/2$. In case of (i), $L_T(y_{j_a})+L_T(y_{j_{a+1}}) \leq d_t/2, L_T(y_{j_{a+1}})+L_T(y_{j_{a+2}}) \leq d_t$ and $L_T(y_{j_{a+2}})+L_T(y_{j_{a+3}}) \leq d_t/2$. Hence,
 \begin{eqnarray*}
E(a,b) & \leq & (d_t/2+d_t+d_t/2)-2d_t-3+d_p+1 = d_p-2 \\
& \leq & d_{P_{m,k}}(x_{i_a},x_{i_b}) \leq d_{P_{m,k}}(x_{i_a},x_{i_b})+d_T(y_{j_a},y_{j_b}).
 \end{eqnarray*}
In case of (ii), we have $d_t/2 < L_T(y_{j_a})+L_T(y_{j_{a+1}}) \leq d_t, L_T(y_{j_{a+1}})+L_T(y_{j_{a+2}}) \leq d_t/2$ and $d_t/2 < L_T(y_{j_{a+2}})+L_T(y_{j_{a+3}}) \leq d_t$. Hence,
\begin{eqnarray*}
E(a,b) & = & L_T(y_{j_a})+L_T(y_{j_{a+3}})+2(L_T(y_{j_{a+1}})+L_T(y_{j_{a+2}}))-2d_t-3+d_p+1 \\
& \leq & L_T(y_{j_a})+L_T(y_{j_{a+3}})+2(d_t/2)-2d_t+d_p-2 \\
& = & L_T(y_{j_a})+L_T(y_{j_{a+3}})-2(d_t/2)+d_p-2 \\
& \leq & L_T(y_{j_a})+L_T(y_{j_{a+3}})-2\phi_T(y_{j_a},y_{j_{a+3}})+d_p-2 \\
& \leq & d_T(y_{j_a},y_{j_b})+d_{P_{m,k}}(x_{i_a},x_{i_b}).
\end{eqnarray*}
If $b=a+2$ then either (i) $d_T(y_{j_a},y_{j_{a+1}}) \leq d_t/2$ and $d_T(y_{j_{a+1}},y_{j_{a+2}}) \leq d_t/2$ or (ii) $d_T(y_{j_a},y_{j_{a+1}}) \leq d_t/2$ and $d_T(y_{j_{a+1}},y_{j_{a+2}}) > d_t/2$. In case of (i), $L_T(y_{j_a})+L_T(y_{j_{a+1}}) \leq d_t/2$ and $L_T(y_{j_{a+1}})+L_T(y_{j_{a+2}}) \leq d_t/2$ and hence, $E(a,b) \leq (d_t/2+d_t/2)-d_t-2+d_p+1 = d_p-1 \leq d_{P_{m,k}}(x_{i_a},x_{i_{a+2}}) \leq d_{P_{m,k}}(x_{i_a},x_{i_{a+2}})+d_T(y_{j_a},y_{j_{a+2}})$. In case of (ii), $L_T(y_{j_a})+L_T(y_{j_{a+1}}) \leq d_t/2$ and $L_T(y_{j_{a+1}})+L_T(y_{j_{a+2}}) > d_t/2$, that is $L_T(y_{j_{a+1}}) \leq d_t/2-L_T(y_{j_{a}})$. Hence,
\begin{eqnarray*}
E(a,b) & = & L_T(y_{j_a})+2L_T(y_{j_{a+1}})+L_T(y_{j_{a+2}})-d_t-2+d_p+1 \\
& \leq & L_T(y_{j_a})+2(d_t/2-L_T(y_{j_a}))+L_T(y_{j_{a+2}})-d_t+d_p-1 \\
& = & L_T(y_{j_a})+L_T(y_{j_{a+2}})-2L_T(y_{j_a})+d_p-1 \\
& \leq & L_T(y_{j_a})+L_T(y_{j_{a+2}})-2\phi_T(y_{j_a},y_{j_{a+2}})+d_p-1 \\
& = & d_T(y_{j_a},y_{j_{a+2}})+d_{P_{m,k}}(x_{i_a},x_{i_{i+2}}).
\end{eqnarray*}

\textsf{Subcase-1.2:} Suppose (a)-(c) and (e) hold. Then for all $0 \leq t \leq 2mn-2$, $L_T(y_{j_t})+L_T(y_{j_{t+1}}) \leq (d_t+2)/2$ as $d_T(y_{j_t},y_{j_{t+1}}) \leq (d_t+2)/2$ and, $y_{j_t}$ and $y_{j_{t+1}}$ are in different branches. If $b=a+1$ then $E(a,b) = L_T(y_{j_a})+L_T(y_{j_b})+d_p = d_T(y_{j_a},y_{j_b})+d_{P_{m,k}}(x_{i_a},x_{i_b})$. If $b=a+2$ then
\begin{eqnarray*}
E(a,b) & = & L_T(y_{j_a})+2L_T(y_{j_{a+1}})+L_T(y_{j_b})-d_t-1+d_p \\
& \leq & L_T(y_{j_a})+L_T(y_{j_b})+2((d_t+2)/2-L_T(y_{j_{a}}))-d_t-1+d_p \\
& = & L_T(y_{j_a})+L_T(y_{j_{b}})-2(L_T(y_{j_{a}})-1)+d_p-1 \\
& \leq & L_T(y_{j_a})+L_T(y_{j_{b}})-2\phi_T(y_{j_a},y_{j_b})+d_p-1 \\
& \leq & d_T(y_{j_a},y_{j_b})+d_{P_{m,k}}(x_{i_a},x_{i_b}).
\end{eqnarray*}
 If $b-a \geq 3$ then
\begin{eqnarray*}
E(a,b) & = & L_T(y_{j_a})+L_T(y_{j_b})+2\sum_{t=a+1}^{b-1}L_T(y_{j_t})-(b-a)(d_t+1)+d_t+d_p+1 \\
& \leq & L_T(y_{j_a})+L_T(y_{j_b})+2(b-a-1)((d_t+2)/2)-(b-a-1)(d_t+1)+d_p \\
& \leq & L_T(y_{j_a})+L_T(y_{j_b})+2((d_t+2)/2)-2d_t-2+d_p \\
& = & L_T(y_{j_a})+L_T(y_{j_b})-2((d_t-2)/2)+d_p-2 \\
& \leq & L_T(y_{j_a})+L_T(y_{j_b})-2\phi_T(y_{j_a},y_{j_b})+d_p-2 \\
& \leq & d_T(y_{j_a},y_{j_b})+d_{P_{m,k}}(x_{i_a},x_{i_b}).
\end{eqnarray*}

\textsf{Subcase-1.3:}  Suppose (a)-(c) and (f) hold. If $z_a = (x_{i_a},y_{j_a})$ and $z_b = (x_{i_b},y_{j_b})$ such that $y_{j_a}$ and $y_{j_b}$ are in different branches. Assume $d_t$ is even then $L_T(y_{j_t}) \leq d_t/2$ for all $0 \leq t \leq 2mn-1$. Hence, we obtain
\begin{eqnarray*}
E(a,b) & \leq & L_T(y_{j_a})+L_T(y_{j_b})+2\sum_{t=a+1}^{b-1}L_T(y_{j_t})-(b-a-1)(d_t+1)+d_p \\
& \leq & L_T(y_{j_a})+L_T(y_{j_b})+2\sum_{t=a+1}^{b-1}(d_t/2)-(b-a-1)(d_t+1)+d_p \\
& = & L_T(y_{j_a})+L_T(y_{j_b})+d_p-(b-a-1) \\
& \leq & d_T(y_{j_a},y_{j_b})+d_{P_{m,k}}(x_{i_a},x_{i_b}).
\end{eqnarray*}
Assume $d_t$ is odd then note that for any $(d_t+d_p)$ consecutive vertices in ordering $O(V(P_{m,k} \Box T)):=z_0,z_1,\ldots,z_{2mn-1}$, at most one $L(y_{j_q}) = (d_t+1)/2$ and for all other vertices $L(y_{j_t}) \leq (d_t-1)/2$. Hence, we have
\begin{eqnarray*}
E(a,b) & = & L_T(y_{j_a})+L_T(y_{j_b})+2\sum_{t=a+1}^{b-1}L_T(y_{j_t})-(b-a-1)(d_t+1)+d_p \\
& = & L_T(y_{j_a})+L_T(y_{j_b})+2(b-a-1)\(\frac{d_t-1}{2}\)+2\(\frac{b-a-1}{d_t+d_p}\)-(b-a-1)(d_t+1)+d_p \\
& = & L_T(y_{j_a})+L_T(y_{j_b})-(b-a-1)((d_t+d_p-2)/(d_t+d_p))+d_p-(b-a-1) \\
& \leq & L_T(y_{y_{j_a}})+L_T(y_{j_b})+d_p-(b-a-1) \\
& \leq & d_T(y_{j_a},y_{j_b})+d_{P_{m,k}}(x_{i_a},x_{i_b}).
\end{eqnarray*}
If $z_a = (x_{i_a},y_{j_a})$ and $z_b = (x_{i_b},y_{j_b})$ such that $y_{j_a}$ and $y_{j_b}$ are in the same branch of $T$ then note that $b-a \geq d_t+d_p$. Hence, $E(a,b) \leq L_T(y_{j_a})+L_T(y_{j_b})+d_p-(b-a-1) \leq L_T(y_{j_a})+L_T(y_{j_b})+d_p-(d_t+d_p-1) = L_T(y_{j_a})+L_T(y_{j_b})-2((d_t-1)/2) \leq L_T(y_{j_a})+L_T(y_{j_b})-2\phi_T(y_{j_a},y_{j_b}) \leq d_T(y_{j_a},y_{j_b}) \leq d_T(y_{j_a},y_{j_b})+d_{P_{m,k}}(x_{i_a},x_{i_b})$.

\textsf{Case-2:} $|W(T)|=2$. In this case, recall that $\ve(T)=0$ and $\delta_T(y_{j_t},y_{j_{t+1}})=1$ for all $0 \leq t \leq 2mn-2$ by the definition of $\delta$.

\textsf{Subcase-2.1:} Suppose (a)-(d) holds. By (b) and (d), we have $L_T(y_{j_0}) = L_T(y_{j_{2mn-1}}) = 0 < (d_t-1)/2$ and for all $1 \leq t \leq 2mn-2$, $L_T(y_{j_t}) \leq \min \{L_T(y_{j_{t-1}})+L_T(y_{j_t}), L_T(y_{j_t})+L_T(y_{j_{t+1}})\} = \min\{d_T(y_{j_{t-1}},y_{j_t})-1,d_T(y_{j_t},y_{j_{t+1}})-1\} \leq (d_t-1)/2$. If $z_a = (x_{i_a},y_{j_a})$ and $z_b = (x_{i_b},y_{j_b})$ such that $y_{j_a}$ and $y_{j_b}$ are in opposite branches of $T$ then $d_T(y_{j_a},y_{j_b}) = L_T(y_{j_a})+L_T(y_{j_b})+1$ and $y_{j_a}$ and $y_{j_b}$ are in different branches then $d_T(y_{j_a},y_{j_b}) = L_T(y_{j_a})+L_T(y_{j_b})$. Hence, in both these cases, we have
\begin{eqnarray*}
E(a,b) & = & L_T(y_{j_a})+L_T(y_{j_b})+2\sum_{t=a+1}^{b-1}L_T(y_{j_t})-(b-a-1)d_t+d_p+1 \\
& \leq & L_T(y_{j_a})+L_T(y_{j_b})+2(b-a-1)((d_t-1)/2)-(b-a-1)d_t+d_p+1 \\
& = & L_T(y_{j_a})+L_T(y_{j_b})+d_p-(b-a-2) \\
& \leq & d_T(y_{j_a},y_{j_b})+d_{P_{m,k}}(x_{i_a},x_{i_b}).
\end{eqnarray*}
If $z_a=(x_{i_a},y_{j_a})$ and $z_b=(x_{i_b},y_{j_b})$ such that $y_{j_a}$ and $y_{j_b}$ are in the same branch of $T$ then note that $b-a \geq 2$. If $b-a \geq 4$ then $\min\{L_T(y_{j_t})+L_T(y_{j_{t+1}}), L_T(y_{j_{t+1}})+L_T(y_{j_{t+2}})\} \leq (d_t-1)/2$ as $\min\{d_T(y_{j_t},y_{j_{t+1}})-1,d_T(y_{j_{t+1}},y_{j_{t+2}})-1\} \leq (d_t-1)/2$ and $\max\{L_T(y_{j_t})+L_T(y_{j_{t+1}}), L_T(y_{j_{t+1}})+L_T(y_{j_{t+2}})\} \leq d_t-1$. Hence, we have
\begin{eqnarray*}
E(a,b) & \leq & (2((d_t-1)/2)+2(d_t-1))-3d_t+d_p+1 \\
& = & d_p-2 \leq d_{P_{m,k}}(x_{i_a},x_{i_b})+d_T(y_{j_a},y_{j_b}).
\end{eqnarray*}
 If $b=a+3$ then there are two possibilities: (i) $d_T(y_{j_a},y_{j_{a+1}}) \leq (d_t+1)/2$, $d_T(y_{j_{a+1}},y_{j_{a+2}}) > (d_t+1)/2$ and $d_T(y_{j_{a+2}},y_{j_{a+3}}) \leq (d_t+1)/2$, (ii) $d_T(y_{j_a},y_{j_{a+1}}) > (d_t+1)/2$, $d_T(y_{j_{a+1}},y_{j_{a+2}}) \leq (d_t+1)/2$ and $d_T(y_{j_{a+2}},y_{j_{a+3}}) > (d_t+1)/2$. In case of (i), note that $L_T(y_{j_a})+L_T(y_{j_{a+1}}) \leq (d_t-1)/2$, $(d_t-1)/2 < L_T(y_{j_{a+1}})+L_T(y_{j_{a+2}}) \leq d_t-1$ and $L_T(y_{j_{a+2}})+L_T(y_{j_{a+3}}) \leq (d_t-1)/2$. Hence, $E(a,b) \leq ((d_t-1)/2+(d_t-1)+(d_t-1)/2)-2d_t+d_p+1 = d_p-1 \leq d_{P_{m,k}}(x_{i_a},x_{i_b}) \leq d_{P_{m,k}}(x_{i_a},x_{i_b})+d_T(y_{j_a},y_{j_b})$. In case of (ii), note that $(d_t-1)/2 < L_T(y_{j_a})+L_T(y_{j_{a+1}}) \leq d_t-1$, $L_T(y_{j_{a+1}})+L_T(y_{j_{a+2}}) \leq (d_t-1)/2$ and $(d_t-1)/2 < L_T(y_{j_{a+2}})+L_T(y_{j_{a+3}}) \leq d_t-1$. Hence,
\begin{eqnarray*}
E(a,b) & \leq & L_T(y_{j_a})+L_T(y_{j_b})+2(L_T(y_{j_{a+1}}) + L_T(y_{j_{a+2}}))-2d_t+d_p+1 \\
& \leq & L_T(y_{j_a})+L_T(y_{j_b})+2((d_t-1)/2)-2d_t+d_p+1 \\
& = & L_T(y_{j_a})+L_T(y_{j_b})-d_t+d_p \\
& = & L_T(y_{j_a})+L_T(y_{j_b})-2((d_t-1)/2)+d_p-1 \\
& \leq & L_T(y_{j_a})+L_T(y_{j_b})-2\phi_T(y_{j_a},y_{j_b})+d_p-1 \\
& \leq & d_{P_{m,k}}(x_{i_a},x_{i_b})+d_T(y_{j_a},y_{j_b}).
\end{eqnarray*}
If $b=a+2$ then either (i) $d_T(y_{j_a},y_{j_{a+1}}) \leq (d_t+1)/2$ and $d_T(y_{j_{a+1}},y_{j_{a+2}}) \leq (d_t+1)/2$, or (ii) $d_T(y_{j_a},y_{j_{a+1}}) \leq (d_t+1)/2$ and $d_T(y_{j_{a+1}},y_{j_{a+2}}) > (d_t+1)/2$. In case of (i), $L_T(y_{j_a})+L_T(y_{j_{a+1}}) \leq (d_t-1)/2$ and $L_T(y_{j_{a+1}})+L_T(y_{j_{a+2}}) \leq (d_t-1)/2$. Hence, we have $E(a,b) \leq ((d_t-1)/2+(d_t-1)/2)-d_t+d_p+1 = d_p \leq d_{P_{m,k}}(x_{i_a},x_{i_b})+d_T(y_{j_a},y_{j_b})$. In case of (ii), $L_T(y_{j_a})+L_T(y_{j_{a+1}}) \leq (d_t-1)/2$ and $(d_t-1)/2 < L_T(y_{j_{a+1}})+L_T(y_{j_{a+2}}) \leq d_t-1$, that is $L_T(y_{j_{a+1}}) \leq (d_t-1)/2-L_T(y_{j_a})$. Hence, $E(a,b) \leq L_T(y_{j_a})+2L_T(y_{j_{a+1}})+L_T(y_{j_{a+2}})-d_t+d_p+1 \leq L_T(y_{j_a})+L_T(y_{j_{a+2}})+2((d_t-1)/2-L_T(y_{j_a}))-d_t+d_p+1 = L_T(y_{j_a})+L_T(y_{j_{a+2}})-2L_T(y_{j_a})+d_p \leq L_T(y_{j_a})+L_T(y_{j_{a+2}})-2\phi_T(y_{j_a},y_{j_{a+2}})+d_p \leq d_{P_{m,k}}(x_{i_a},x_{i_b})+d_T(y_{j_a},y_{j_b})$.

\textsf{Subcase-2.2:} Suppose (a)-(c) and (e) holds. Then for all $0 \leq t \leq 2mn-2$, $L_T(y_{j_t})+L_T(y_{j_{t+1}}) \leq (d_t-1)/2$ as $d_T(y_{j_t},y_{j_{t+1}}) \leq (d_t+1)/2$ and, $y_{j_t}$ and $y_{j_{t+1}}$ are in the opposite branches. If $b=a+1$ then $E(a,b) = L_T(y_{j_a})+L_T(y_{j_b})+1+d_p = d_{P_{m,k}}(x_{i_a},x_{i_b})+d_T(y_{j_a},y_{j_b})$. If $b=a+2$ then
\begin{eqnarray*}
E(a,b) & = & L_T(y_{j_a})+2L_T(y_{j_{a+1}})+L_T(y_{j_b})-d_t+d_p+1 \\
& \leq & L_T(y_{j_a})+L_T(y_{j_b})+2((d_t-1)/2-L_T(y_{j_a}))+L_T(y_{j_b})-d_t+d_p+1 \\
& = & L_T(y_{j_a})+L_T(y_{j_b})-2L_T(y_{j_a})+d_p \\
& \leq & L_T(y_{j_a})+L_T(y_{j_b})-2\phi_T(y_{j_a},y_{j_b})+d_p \\
& \leq & d_T(y_{j_a},y_{j_b})+d_T(x_{i_a},x_{i_b}).
\end{eqnarray*}
If $b-a \geq 3$ then
\begin{eqnarray*}
E(a,b) & = & L_T(y_{j_a})+L_T(y_{j_b})+2\sum_{t=a+1}^{b-1}L_T(y_{j_t})-(b-a-1)d_t+d_p+1 \\
& \leq & L_T(y_{j_a})+L_T(y_{j_b})+2((d_t-1)/2)-2d_t+d_p+1 \\
& = & L_T(y_{j_a})+L_T(y_{j_b})-d_t+d_p \\
& = & L_T(y_{j_a})+L_T(y_{j_b})-2((d_t-1)/2)+d_p-1 \\
& \leq & L_T(y_{j_a})+L_T(y_{j_b})-2\phi_T(y_{j_a},y_{j_b})+d_p-1 \\
& \leq & d_T(y_{j_a},y_{j_b})+d_{P_{m,k}}(x_{i_a},x_{i_b}).
\end{eqnarray*}

\textsf{Subcase-2.3:} Suppose (a)-(c) and (f) holds. In this case, recall that $L_T(y_{j_t}) \leq (d_t-1)/2$ for all $0 \leq t \leq 2mn-1$. If $z_a = (x_{i_a},y_{j_a})$ and $z_b = (x_{i_b},y_{j_b})$ such that $y_{j_a}$ and $y_{j_b}$ are in opposite branches or in different branches of $T$ then
\begin{eqnarray*}
E(a,b) & \leq & L_T(y_{j_a})+L_T(y_{j_b})+2\sum_{t=a+1}^{b-1}L_T(y_{j_t})-(b-a-1)d_t+d_p+1 \\
& \leq & L_T(y_{j_a})+L_T(y_{j_b})+2(b-a-1)((d_t-1)/2)-(b-a-1)d_t+d_p+1 \\
& = & L_T(y_{j_a})+L_T(y_{j_b})+d_p-(b-a-2) \\
& \leq & d_T(y_{j_a},y_{j_b})+d_{P_{m,k}}(x_{i_a},x_{i_b}).
\end{eqnarray*}
If $z_a=(x_{i_a},y_{j_a})$ and $z_b=(x_{i_b},y_{j_b})$ such that $y_{j_a}$ and $y_{j_b}$ are in the same branch of $T$ then recall that $b-a \geq d_t+d_p$. Hence, as above, we have
\begin{eqnarray*}
E(a,b) & \leq & L_T(y_{j_a})+L_T(y_{j_b})+d_p-(b-a-2) \\
& \leq & L_T(y_{j_a})+L_T(y_{j_b})+d_p-(d_t+d_p-2) \\
& = & L_T(y_{j_a})+L_T(y_{j_b})-2((d_t-2)/2) \\
& \leq & L_T(y_{j_a})+L_T(y_{j_b})-2\phi_T(y_{j_a},y_{j_b}) \\
& \leq & d_T(y_{j_a},y_{j_b})+d_{P_{m,k}}(x_{i_a},x_{i_b}).
\end{eqnarray*}
\end{proof}


We now determine the radio number for the Cartesian product of the Petersen graph and star using our results.

The famous Petersen graph is the generalized Petersen graph $P_{5,2}$ as shown in figure \ref{Fig1}. We denote $V(P_{5,2}) = \{x_1,x_2,\ldots,x_{10}\}$ with $E(P_{5,2}) = \{x_ix_{i+2},x_jx_{j+3},x_1x_{10},x_2x_7,x_3x_9 : i=1,2,3,4,6,7,8; j=1,2,5,6,7\}$. An $n$-star is a tree consisting of $n$ leaves and another vertex joined to all leaves by edges denoted by $K_{1,n}$. Denote $V(K_{1,n})=\{y_j : j = 0,1,2,\ldots,n\}$ with $E(K_{1,n}) = \{y_0y_j : j = 1,2,\ldots,n\}$.

\begin{figure}[ht!]
\centering
\includegraphics[width=2in]{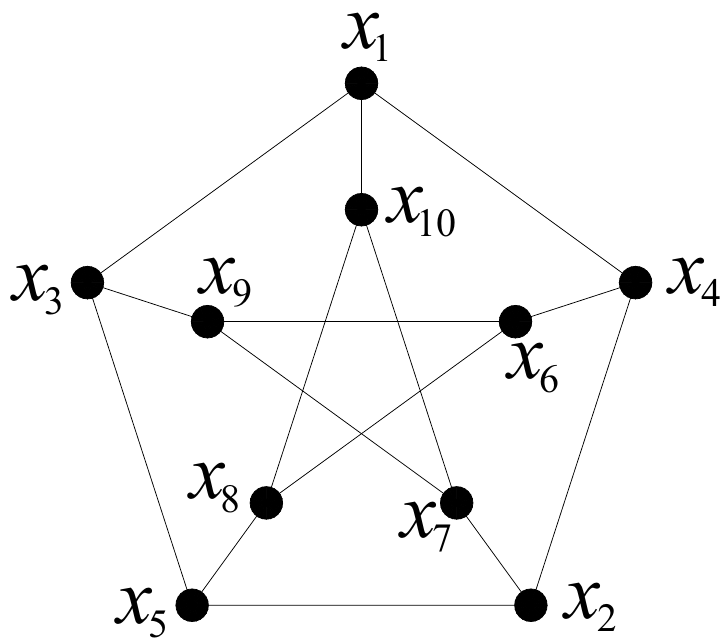}
\caption{The Peterson graph $P_{5,2}$.}
\label{Fig1}
\end{figure}

\begin{Theorem}\label{thm:PK1n} Let $P_{5,2}$ be the Petersen graph and $K_{1,n}$ be a star graph with $n\geq 3$. Then
\begin{equation}\label{rn:PK1n}
\rn(P_{5,2} \Box K_{1,n})= 10n+27.
\end{equation}
\end{Theorem}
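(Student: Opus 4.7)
The approach has two parts: derive the lower bound $10n+27$ from Theorem~\ref{thm:lower}, then match it with an explicit ordering verified via Theorem~\ref{thm:suf}. For the lower bound, $P_{5,2}$ has $d_p=2$, the star $K_{1,n}$ has $d_t=2$, a unique weight center $y_0$ so $\ve=1$, and total level $L(K_{1,n})=n$; substituting $m=5$ and $|V(K_{1,n})|=n+1$ into \eqref{rn:lower} gives $(10(n+1)-1)\cdot 3-20n=10n+27$.

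For the upper bound I will apply Theorem~\ref{thm:suf}. Two simplifications are immediate: any two distinct vertices of $K_{1,n}$ lie at distance at most $2=(d_t+1+\ve)/2$, so condition~(e) is automatic, and distinct vertices of $K_{1,n}$ always lie in different branches, reducing condition~(c) to $y_{j_t}\neq y_{j_{t+1}}$. Since $\diam(P_{5,2})=2$, the secondary distance bound in condition~(a) is vacuous once $x_{i_t}\neq x_{i_{t+2}}$. Hence it suffices to construct a bijection $t\mapsto(x_{i_t},y_{j_t})$ from $\{0,1,\ldots,10n+9\}$ to $V(P_{5,2}\Box K_{1,n})$ satisfying $d_{P_{5,2}}(x_{i_t},x_{i_{t+1}})=2$, $x_{i_t}\neq x_{i_{t+2}}$, $y_{j_t}\neq y_{j_{t+1}}$, and $y_{j_0}=y_{j_{10n+9}}=y_0$.

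The Petersen coordinates will cycle through a Hamiltonian cycle $v_0,v_1,\ldots,v_9$ in the complement of $P_{5,2}$ (the complement is the $6$-regular Johnson graph $J(5,2)$, and an explicit Hamiltonian cycle in the paper's labeling is $x_1,x_2,x_3,x_4,x_5,x_7,x_6,x_{10},x_9,x_8$) by setting $x_{i_t}=v_{t\bmod 10}$, which automatically makes consecutive $x$-coordinates non-adjacent in $P_{5,2}$ and forces $x_{i_t}\neq x_{i_{t+2}}$. The star coordinates will be encoded through permutations $\sigma_0,\sigma_1,\ldots,\sigma_9$ of $\{0,1,\ldots,n\}$ by defining $y_{j_t}=y_{\sigma_r(k)}$ whenever $t=10k+r$; the permutation property of each $\sigma_r$ ensures $t\mapsto(x_{i_t},y_{j_t})$ is a bijection, and the surviving requirements translate into $\sigma_r(k)\neq\sigma_{r+1}(k)$ for all $0\leq r\leq 8$ and $k$, $\sigma_0(k+1)\neq\sigma_9(k)$ for $0\leq k\leq n-1$, together with the endpoint conditions $\sigma_0(0)=\sigma_9(n)=0$.

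A natural attempt takes $\sigma_r(k)=(k+r)\bmod(n+1)$ for $r\leq 8$, reducing the task to finding a permutation $\sigma_9$ of $\{0,\ldots,n\}$ with $\sigma_9(n)=0$ that avoids the two forbidden values $(k+8)\bmod(n+1)$ and $(k+1)\bmod(n+1)$ at each $k<n$. The main obstacle is uniform existence of $\sigma_9$ for all $n\geq 3$: for every $n\neq 6$ the two forbidden values are compatible with the endpoint condition and a short greedy (Hall-type) argument on the bipartite graph of admissible placements produces a valid $\sigma_9$ (for instance $\sigma_9=(2,3,1,0)$ when $n=3$). The sole exceptional case is $n=6$, where $n+1=7$ forces $\sigma_8\equiv\sigma_1$ and the within-row constraint then clashes with $\sigma_9(6)=0$; the remedy is to replace $\sigma_8$ by the cyclic shift $k\mapsto(k+2)\bmod 7$, after which $\sigma_9=(6,1,2,3,4,5,0)$ satisfies every constraint. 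Once a valid $\sigma_9$ is exhibited, all hypotheses of Theorem~\ref{thm:suf} are met and the desired equality $\rn(P_{5,2}\Box K_{1,n})=10n+27$ follows, establishing~\eqref{rn:PK1n}.
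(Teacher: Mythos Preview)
Your argument is correct in outline and takes a genuinely different route from the paper. The paper works through Theorem~\ref{thm:main}: it builds the ordering via case analysis on $n\bmod 5$ using three explicit permutations $\alpha,\beta,\tau$, and then verifies inequality~\eqref{eqn:dab} directly by splitting on $b-a$. You instead invoke Theorem~\ref{thm:suf} with condition~(e), which is a cleaner fit for the star: since $\diam(K_{1,n})=2$ and $\ve=1$, condition~(e) and condition~(c) are automatic for any ordering with $y_{j_t}\neq y_{j_{t+1}}$, and condition~(a) collapses to $x_{i_t}\neq x_{i_{t+2}}$. This reduces the whole problem to a transparent combinatorial statement about a Hamiltonian cycle in $\overline{P_{5,2}}$ and a family of permutations $\sigma_0,\ldots,\sigma_9$, which is conceptually more satisfying than the paper's case split.

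The one place where your write-up is thinner than the paper's is the existence of $\sigma_9$. You assert that ``a short greedy (Hall-type) argument'' produces it for $n\neq 6$, but you do not actually check Hall's condition. It does go through: after fixing $\sigma_9(n)=0$, each of the remaining $n$ positions has at most two forbidden nonzero values, each nonzero value is forbidden at at most two positions, and these degree bounds force Hall's condition for every $|A|$ once $n\geq 4$ (with $n=3$ handled by your explicit $(2,3,1,0)$). A sentence to this effect, or simply writing down a uniform formula for $\sigma_9$ (e.g.\ $\sigma_9(k)=n-k$ works whenever $n+1$ is coprime to neither problematic shift, with a small adjustment otherwise), would close the gap. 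The paper avoids this issue by giving fully explicit orderings in every residue class, trading conceptual economy for constructive completeness.
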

\begin{proof} The order, diameter and the total level of $K_{1,n}$ are given by $n+1$, $2$ and $n$, respectively. Substituting all these in the right-hand side of \eqref{rn:lower}, we obtain the right-hand side of \eqref{rn:PK1n} which is a lower bound for $\rn(P_{5,2} \Box K_{1,n})$. Now we prove that this lower bound is tight. The result is easy to verify for $n=3,4$ and $5$. Hence, we consider $n \geq 6$ and for these cases, it suffices to give an ordering $O(V(P_{5,2} \Box K_{1,n})) := z_0,z_1,\ldots,z_{10n+9}$ of vertices of $P_{5,2} \Box K_{1,n}$ satisfying the conditions (a)-(c) of Theorem \ref{thm:main}. We use three permutations
$\alpha =
\begin{pmatrix}
1 & 2 & 3 & 4 & 5 & 6 & 7 & 8 & 9 & 10\\
1 & 5 & 9 & 3 & 7 & 2 & 4 & 6 & 8 & 10  \\
\end{pmatrix}
$,
$\beta =
\begin{pmatrix}
1 & 2 & 3 & 4 & 5 & 6 & 7 & 8 & 9 & 10\\
1 & 4 & 2 & 5 & 3 & 6 & 9 & 7 & 10 & 8  \\
\end{pmatrix}
$ and
$\tau =
\begin{pmatrix}
1 & 2 & 3 & 4 & 5 & 6 & 7 & 8 & 9 & 10\\
5 & 1 & 2 & 3 & 4 & 10 & 6 & 7 & 8 & 9  \\
\end{pmatrix}
$ to defined such ordering of $V(P_{5,2} \Box K_{1,n})$.
We consider the following two cases.

\textsf{Case-1:} $n \equiv 0$ (mod $5$).

In this case, we first rename $(x_i,y_j)\;(1 \leq i \leq 10, 0 \leq j \leq n)$ as $(a_r,b_s)$ as follows:
$$(a_r,b_s) = \left\{
\begin{array}{ll}
(x_i,y_j), & \mbox{if $1 \leq i \leq 10$ and $j=0$}, \\
(x_{\alpha\tau^{j-1}(i)},y_j), & \mbox{if $1 \leq i \leq 10$ and $1 \leq j \leq n$}.
\end{array}
\right.$$

We now define an ordering $O(V(P_{5,2} \Box K_{1,n})) := z_0,z_1,\ldots,z_{10n+9}$ as follows: Let $z_0 = (a_5,b_0)$ and for $1 \leq t \leq 10n+9$, let $z_t = (a_r,b_s)$, where
$$t := \left\{
\begin{array}{ll}
(r-1)n+s, & \mbox{if $1 \leq r \leq 10$ and $1 \leq s \leq n$}, \\
10n+r, & \mbox{if $1 \leq r \leq 4$ and $s=0$}, \\
10n+r-2, & \mbox{if $7 \leq r \leq 10$ and $s=0$}, \\
10n+9, & \mbox{if $r = 6$ and $s=0$}.
\end{array}
\right.$$

\textsf{Case-2:} $n \not\equiv 0$ (mod $5$).

In this case, we first rename $(x_i,y_j)\;(1 \leq i \leq 10, 0 \leq j \leq n)$ as $(a_r,b_s)$ as follows:
$$(a_r,b_s) = \left\{
\begin{array}{ll}
(x_{i},y_j), & \mbox{if $1 \leq i \leq 10$ and $j=0$}, \\
(x_{\tau^{j-1}(i)}, y_j), & \mbox{if $1 \leq i \leq 10, 1 \leq j \leq n$ and $n \equiv 1$ (mod $5$)}, \\
(x_{\beta\tau^{j-1}(i)}, y_j), & \mbox{if $1 \leq i \leq 10, 1 \leq j \leq n$ and $n \equiv 2$ (mod $5$)}, \\
(x_{\beta^3\tau^{j-1}(i)}, y_j), & \mbox{if $1 \leq i \leq 10, 1 \leq j \leq n$ and $n \equiv 3$ (mod $5$)}, \\
(x_{\beta^2\tau^{j-1}(i)}, y_j), & \mbox{if $1 \leq i \leq 10, 1 \leq j \leq n$ and $n \equiv 4$ (mod $5$)}, \\
\end{array}
\right.$$

We now define an ordering $O(V(P_{5,2} \Box K_{1,n})):=z_0,z_1,\ldots,z_{10n+9}$ as follows: Let $z_0=(a_5,b_0)$ and for $1 \leq t \leq 10n+9$, let $z_t=(a_r,b_s)$, where
$$t := \left\{
\begin{array}{ll}
(r-1)n+s, & \mbox{if $1 \leq r \leq 10$ and $1 \leq s \leq n$}, \\
10n+5-r, & \mbox{if $1 \leq r \leq 4$ and $s=0$}, \\
10n+r-1, & \mbox{if $6 \leq r \leq 10$ and $s=0$}.
\end{array}
\right.$$

Then, in both the cases above, note that $d_{P_{5,2}}(x_{i_t},x_{i_{t+1}}) = 2$ and $L_T(y_{j_0})+L_T(y_{j_{10n+9}}) = 0$, where $z_t = (x_{i_t},y_{j_t})$ for $0 \leq t \leq 10n+9$. Hence, the conditions (a)-(b) of Theorem \ref{thm:main} is satisfied.

\textsf{Claim:} The above defined ordering $O(V(P_{5,2} \Box K_{1,n})) := z_0,z_1,\ldots,z_{10n+9}$ satisfies \eqref{eqn:dab}.

Let $z_a = (x_{i_a},y_{j_a}), z_b = (x_{i_b},y_{j_b}); 0 \leq a < b \leq 10n+9$ be two vertices. Denote the right-hand side of \eqref{eqn:dab} by $E(a,b)$. If $b-a = 1$ then $E(a,b) = L_T(y_{j_a})+L_T(y_{j_b})-3+5 = L_T(y_{j_a})+L_T(y_{j_b}) + 2 = d_{P_{5,2}}(x_{i_a},x_{i_b})+d_{T}(y_{j_a},y_{j_b})$. Assume $b-a \geq 2$. If $a=0$ and $b=2$ then $d_{P_{m,k}}(x_{i_0},x_{i_2}) = d_T(y_{j_0},y_{j_2}) = 1$ and hence $E(a,b) = L_T(y_{j_0})+2L_T(y_{j_1})+L_T(y_{j_2})-d_t-1 = 2 = d_{P_{m,k}(x_{i_0},x_{i_2})}+d_T(y_{j_0},y_{j_2})$. If $a=0$ and $b \geq 3$ then $E(a,b) \leq 1 \leq d_{P_{m,k}}(x_{i_0},x_{i_b})+d_T(y_{j_0},y_{j_b})$ as $z_0 = (x_{i_0},y_{j_0}) \neq z_b = (x_{i_b},y_{j_b})$. Hence, the \eqref{eqn:dab} is satisfied. If $1 \leq a < b \leq 10n$ then observe that $L_T(y_{t}) = 1$ for all $1 \leq t \leq 10n$ and hence we have
\begin{eqnarray*}
E(a,b) & = & \sum_{t=a}^{b-1} [L_T(y_{j_t})+L_T(y_{j_{t+1}})-2-1] +2+2+1 \\
& \leq & 2(b-a)-3(b-a)+5 \\
& = & 5-(b-a).
\end{eqnarray*}
If $b-a = 2$ then note that $d_{P_{5,2}}(x_{i_a},x_{i_b}) \geq 1$ and $d_{T}(y_{j_a},y_{j_b}) = 2$ and hence $E(a,b) \leq 3 \leq d_{P_{5,2}}(x_{i_a},x_{i_b})+d_T(y_{j_a},y_{j_b})$. If $b-a = 3$ then observe that $d_{P_{5,2}}(x_{i_a},x_{i_b}) \geq 1$ and $d_T(y_{j_a},y_{j_b}) \geq 2$ and hence $E(a,b) \leq 2 \leq d_{P_{5,2}}(x_{i_a},x_{i_b})+d_T(y_{j_a},y_{j_b})$. If $b-a \geq 4$ then as $z_a \neq z_b$, we have $d_{P_{5,2}}(x_{i_a},x_{i_b})+d_T(y_{j_a},y_{j_b}) \geq 1$ and hence $E(a,b) \leq 1 \leq d_{P_{5,2}}(x_{i_a},x_{i_b})+d_T(y_{j_a},y_{j_b})$. Again if $10n < b \leq 10n+9$ then it is easy to verify that \eqref{eqn:dab} is satisfied. Therefore, the condition (c) of Theorem \ref{thm:main} is satisfied.
\end{proof}

\begin{Example} In the following Table \ref{tab:P52K16}, a vertex ordering $O(V(P_{5,2} \Box K_{1,6})):=z_0,z_1,\ldots,z_{69}$ and an optimal radio labeling of $P_{5,2} \Box K_{1,6}$ obtained from the proof of Theorem \ref{thm:PK1n} is shown.
\end{Example}
\begin{table}[ht!]
\centering
\caption{$\rn(P_{5,2} \Box K_{1,6}) = 87$.}\label{tab:P52K16}
\begin{tabular}{|c|l|l|l|l|l|l|l|l|l|l|l|}
\hline
$(x_i,y_j)$ & \multicolumn{1} {|c|}{$y_0$} & \multicolumn{1} {|c|}{$y_1$} & \multicolumn{1} {|c|}{$y_2$} & \multicolumn{1} {|c|}{$y_3$} & \multicolumn{1} {|c|}{$y_4$} & \multicolumn{1} {|c|}{$y_5$} & \multicolumn{1} {|c|}{$y_6$} \\ \hline
    $x_{1}$ & $z_{64} \ra 72$ & $z_{1} \ra 2$ &  $z_{26} \ra 27$ & $z_{21} \ra 22$ & $z_{16} \ra 17$ & $z_{11} \ra 12$ & $z_{6} \ra 7$  \\
    $x_2$ & $z_{63} \ra 69$ & $z_{7} \ra 8$ & $z_{2} \ra 3$ & $z_{27} \ra 28$ & $z_{22} \ra 23$ & $z_{17} \ra 18$ & $z_{12} \ra 13$  \\
    $x_3$ & $z_{62} \ra 66$ & $z_{13} \ra 14$ & $z_{8} \ra 9$ & $z_{3} \ra 4$ & $z_{28} \ra 29$ & $z_{23} \ra 24$ & $z_{18} \ra 19$  \\
    $x_4$ & $z_{61} \ra 63$ & $z_{19} \ra 20$ & $z_{14} \ra 15$ & $z_{9} \ra 10$ & $z_{4} \ra 5$ & $z_{29} \ra 30$ & $z_{24} \ra 25$  \\
    $x_5$ & $z_{0} \ra 0$ & $z_{25} \ra 26$ & $z_{20} \ra 21$ & $z_{15} \ra 16$ & $z_{10} \ra 11$ & $z_{5} \ra 6$ & $z_{30} \ra 31$  \\
    $x_6$ & $z_{65} \ra 75$ & $z_{31} \ra 32$ & $z_{56} \ra 57$ & $z_{51} \ra 52$ & $z_{46} \ra 47$ & $z_{41} \ra 42$ & $z_{36} \ra 37$  \\
    $x_7$ & $z_{66} \ra 78$ & $z_{37} \ra 38$ & $z_{32} \ra 33$ & $z_{57} \ra 58$ & $z_{52} \ra 53$ & $z_{47} \ra 48$ & $z_{42} \ra 43$  \\
    $x_8$ & $z_{67} \ra 81$ & $z_{43} \ra 44$ & $z_{38} \ra 39$ & $z_{33} \ra 34$ & $z_{58} \ra 59$ & $z_{53} \ra 54$ & $z_{48} \ra 49$  \\
    $x_9$ & $z_{68} \ra 84$ & $z_{49} \ra 50$ & $z_{44} \ra 45$ & $z_{39} \ra 40$ & $z_{34} \ra 35$ & $z_{59} \ra 60$ & $z_{54} \ra 55$  \\
    $x_{10}$ & $z_{69} \ra 87$ & $z_{55} \ra 56$ & $z_{50} \ra 51$ & $z_{45} \ra 46$ & $z_{40} \ra 41$ & $z_{35} \ra 36$ & $z_{60} \ra 61$  \\
    \hline
    \end{tabular}
\end{table}

\end{document}